\newtheorem*{lemma}{Lemma}
\newtheorem*{prop}{Proposition}
\newtheorem*{thm}{Theorem}
\newtheorem*{cor}{Corollary}
\newcommand{\nc}{\newcommand}
\nc{\ad}{\operatorname{ad}} \nc{\tr}{\operatorname{tr}}
\nc{\tp}{\operatorname{top}} \nc{\rank}{\operatorname{rank}}
\nc{\Inj}{\operatorname{Inj}} \nc{\Hom}{\operatorname{Hom}}
\nc{\End}{\operatorname{End}} \nc{\supp}{\operatorname{supp}}
\nc{\ch}{\operatorname{ch}} \nc{\im}{\operatorname{im}}
\nc{\gr}{\operatorname{gr}}\nc{\Id}{\operatorname{Id}}
\begin{document}

\title[Zhelobenko Invariants]{Analogue Zhelobenko Invariants and the Kostant Clifford Algebra Conjecture}
\author[Anthony Joseph]{Anthony Joseph}
%
\footnotetext[1]{Work supported in part by
Israel Science Foundation Grant, no. 710724.}
\date{\today}
\maketitle

\vspace{-.9cm}\begin{center}
Donald Frey Professional Chair\\
Department of Mathematics\\
The Weizmann Institute of Science\\
Rehovot, 76100, Israel\\
anthony.joseph@weizmann.ac.il
\end{center}\

Key Words: Zhelobenko invariants, BGG operators.

 AMS Classification: 17B35

 \

\textbf{Abstract}  Let $\mathfrak g$ be a complex simple Lie algebra and $\mathfrak h$ a Cartan subalgebra. The Clifford algebra $C(\mathfrak g)$ of $\mathfrak g$ admits a Harish-Chandra map.  Kostant conjectured (as communicated to Bazlov in about 1997) that the value of this map on a (suitably chosen) fundamental invariant of degree $2m+1$ is just the zero weight vector of the simple $(2m+1)$-dimensional module of the principal s-triple obtained from the Langlands dual $\mathfrak g^\vee$.  Bazlov \cite {B1} settled this conjecture positively in type $A$.

The Kostant conjecture was reformulated (Alekseev-Bazlov-Rohr \cite {AM,B2,R}) in terms of the Harish-Chandra map for the enveloping algebra $U(\mathfrak g)$ composed with evaluation at the half sum $\rho$ of the positive roots.

 In an earlier work we settled \cite {J2} an analogue of the Kostant conjecture obtained by replacing the Harish-Chandra map by a ``generalized Harish-Chandra" map whose image is described via Zhelobenko invariants.   Here we show that there are analogue Zhelobenko invariants which describe the image of the Harish-Chandra map.  Following this a similar proof goes through.

\section{Introduction}

\

The base field $k$ is assumed algebraically closed of characteristic zero throughout.

\subsection{}\label{1.1}

Let $\mathfrak g$ be a simple Lie algebra with $U(\mathfrak g)$ its enveloping algebra.  Let $V$ be a simple finite dimensional $\mathfrak g$ module.

This work is a sequel to \cite {J2} in which we settled an ``analogue" Kostant Clifford algebra conjecture phrased in terms of the generalized Harish-Chandra map $\hat{\Phi}$ acting on $(V\otimes U(\mathfrak g))^\mathfrak g$ .  In this the Zhelobenko invariants describe the image of $\hat{\Phi}$.  Our method of proof used a partial description of these invariants together with a symmetric algebra computation occurring in the works of Bazlov \cite {B2} and Alekseev's student Rohr \cite {R}.

The Kostant Clifford algebra conjecture as rephrased by Alekseev and Bazlov \cite {B2} involves the usual Harish-Chandra map $\Phi$ also acting on $(V\otimes U(\mathfrak g))^\mathfrak g$. We note that there exists ``analogue" Zhelobenko invariants which describe the image of $\Phi$.  This came rather as a surprise though it might have been anticipated by the close relation between the corresponding determinants noted in \cite {J1}.  This results from fairly easy $\mathfrak {sl}(2)$ computations.  From it we deduce an analogue of \cite [Cor. 2.5]{J2} in which the denominator in the left hand side of \cite [Eq. (9)]{J2} is changed through the replacement of $1$ by $-1$.  (Here one may recall that the symmetric algebra analogue \cite [Eq. (12)]{J2} of this result is obtained through the replacement of $1$ by $0$.)

 Using the above result the proof of the Kostant conjecture follows exactly the proof given in \cite {J2} for the analogue Kostant conjecture. 

\subsection{}\label{1.2}

Let $\mathfrak h$ Cartan subalgebra of $\mathfrak g$.

Let $\mathscr F$ be the canonical (resp. degree) filtration on $U(\mathfrak g)$.

Let $\Delta \subset \mathfrak h^*$ be the set of non-zero roots of $\mathfrak g$, $\Delta^+$ a choice of positive roots and $\pi=\{\alpha\}_{i\in I}:I:=1,2,\ldots,\ell$, the corresponding set of simple roots and $\varpi_i:i \in I$ the corresponding set of fundamental weights. Let $x_\alpha:\alpha \in \Delta$ form (part) of a Chevalley basis for $\mathfrak g$. Set $x_i=x_{\alpha_i},y_i=x_{-\alpha_i}, h_i=[x_i,y_i]$.  They form the basis of an $\mathfrak {sl}(2)$ subalgebra $\mathfrak s_i$ of $\mathfrak g$.  The simple coroots $h_i=\alpha_i^\vee:i \in I$ complete the Chevalley basis.

Given $\gamma \in \Delta$, let $\gamma^\vee$ denote the corresponding coroot (for example identified with $2\gamma/(\gamma,\gamma)$ through the Cartan inner product on $\mathfrak h^*$).  Let $\Delta^\vee$ (resp. $\Delta^{\vee+}$) denote the set of coroots (resp. positive coroots).

Let $s_i$ be the simple reflection defined by $\alpha_i\in \pi$ and $W$ the group they generate.  Let $\rho$ denote the half sum of the elements of $\Delta^+$.

%

Let $\mathfrak g^\vee$ denote the Langlands dual of $\mathfrak g$. Its roots are the coroots of $\mathfrak g$. One may identify a Cartan subalgebra $\mathfrak h^\vee$ of $\mathfrak g^\vee$ with $\mathfrak h^*$ and then with $\mathfrak h$ though the Cartan scalar product.  Let $e^\vee,h^\vee,f^\vee$ be a principal s-triple for $\mathfrak g^\vee$ chosen so that $h^\vee \in \mathfrak h^\vee$. One may identify $h^\vee$ with $\rho$.

Through the above identifications we obtain a filtration on $\mathfrak h$ through the adjoint action of $e^\vee$, namely $\mathscr F^m(\mathfrak h)= \{h \in \mathfrak h|(e^\vee)^{m+1}h=0\}$.

Recall that there is a translated Weyl group action on $\mathfrak h^*$ defined by $w.\lambda=w(\lambda +\rho)-\rho$.  This induces a translated Weyl group action on $S(\mathfrak h)$ by identifying the latter with the algebra of polynomial functions on $\mathfrak h^*$ and transport of structure.

\subsection{}\label{1.3}

Let $V$ be a finite dimensional simple $\mathfrak g$ module and $V_\mu$ its weight subspace of weight $\mu \in \mathbb Z \pi$.  Consider $U(\mathfrak g)$ as a $\mathfrak g$ module under adjoint action and then the tensor product $V\otimes U(\mathfrak g)$ as a $U(\mathfrak g)$ module through the coproduct.   Let $\varphi:U(\mathfrak g) \rightarrow U(\mathfrak h)$ be the Harish-Chandra map and $\varphi_\lambda$ its composition with evaluation at $\lambda \in \mathfrak h^*$.  Set $\Phi^V=Id\otimes \varphi:V\otimes U(\mathfrak g)\rightarrow V_0 \otimes U(\mathfrak h)$ and $\Phi^V_\lambda=Id\otimes \varphi_\lambda:V\otimes U(\mathfrak g)\rightarrow V_0$.  We omit the $V$ superscript exactly when $V$ is the adjoint module which we identify with $\mathfrak g$ and then $V_0$ with $\mathfrak h$.

\subsection{}\label{1.4}

The Kostant conjecture (as rephrased by Alekseev and Bazlov \cite {B2}) asserts that $\Phi_{s\rho}(\mathfrak g \otimes \mathscr F^m U(\mathfrak g))^\mathfrak g =  \mathscr F^m(\mathfrak h), \forall m \in \mathbb N$, for all $s$ in a cofinite subset of $k$.  (For $\mathfrak g= \mathfrak {sl}(n)$, the case $s=1$ is resolved in Bazlov's thesis \cite {B1} and for more general $s$ it is resolved in recent work of Alekseev and Moreau \cite {AM}.)  At least when $\mathfrak g$ is simply-laced one may easily formulate a more general conjecture in which the adjoint module is replaced by an arbitrary simple finite dimensional $\mathfrak g$ module $V$.  However as pointed out to me by Alekseev, this fails badly even though its symmetric algebra analogue still holds.  It must therefore be presumed that the Kostant conjecture is a hard problem in which some special feature of the adjoint module must be used.  However to begin with we shall place ourselves in this more general context.

\subsection{}\label{1.5}

Zhelobenko \cite {Z} defined a set of operators on $V\otimes U(\mathfrak g)$ acting trivially on invariants with the marvellous property that they factor through the generalized Harish-Chandra map $\hat{\Phi}$.  Khoroshkin, Nazarov and Vinberg \cite {KNV} proved the remarkable fact that $\hat{\Phi}(V\otimes U(\mathfrak g))^\mathfrak g$ is exactly the set of Zhelobenko invariants in $V_0\otimes U(\mathfrak h)$.  We call this the KNV theorem.  This fact was used in \cite {J2} to settle the analogue Kostant conjecture and here we took advantage of considerable simplification that occurs when $V$ is the adjoint module.

Although at first sight the above method would not seem to work for the Kostant conjecture,  it turns out that we may (rather easily) define a set $\Xi = \{\xi_i:i \in I\}$ of analogue Zhelobenko operators \textit{directly} on $V_0\otimes U(\mathfrak h)$ with the property that $\Phi(V\otimes U(\mathfrak g))^\mathfrak g= (V_0\otimes U(\mathfrak h))^\Xi$.   Then following \cite {J2} we describe rather explicitly the right hand side when $V$ is the adjoint module obtaining the analogue of \cite [Cor. 2.5]{J2} mentioned in \ref {1.1}.  From this the proof of the Kostant conjecture results by following precisely the analysis in \cite {J2}.

\subsection{}\label{1.6}

The four sections of this manuscript which describe our proof of the Kostant conjecture (together with a fifth section concerning a question of Hitchin which
has since been changed) were sent to Alekseev and Moreau with whom I even proposed joint publication, as well as to Kostant and Kumar.  Alekseev and Moreau
sent me in return a preliminary version of [AM2] announcing a proof of
the Kostant conjecture.  Their proof depends crucially on my earlier
paper \cite {J2} and proceeds by extracting a very special case of the
comparison between the Harish-Chandra and generalized Harish-Chandra maps
established in Section 3 below.

\

\textbf{Acknowledgements}. The origins of this work go back to discussions with Alekseev and Nazarov.  I would also like to thank Shrawan Kumar for bringing the Hitchin conjecture \cite [2.2, Remark]{H} to my attention though it transpired that this was a slightly different question.

\section{The Analogue Zhelobenko Operators}

In this and later sections we shall use the \textit{same} symbols to denote the \textit{analogue} Zhelobenko operators (resp. analogue Zhelobenko invariants) as was used in \cite {J2} to denote the Zhelobenko operators (resp. Zhelobenko invariants).   This will avoid wasting extra symbols and as this is a separate paper no confusion should result.

\subsection{}\label{2.1}

Define $V$ as in \ref {1.4}.  For all $i \in I$ set $V_{\mathbb Z\alpha_i}=\oplus_{n \in \mathbb Z}V_{n\alpha_i}$.

Zhelobenko operators are really about $\mathfrak {sl}(2)$ computations.  Thus let $\mathfrak r_i$ (resp. $\mathfrak m_i, \mathfrak m^-_i$) denote the Levi factor
(resp. nilradical, opposed nilradical) of the minimal parabolic defined by $i\in I$.  One has  $\mathfrak r_i = \mathfrak h + \mathfrak s_i$. The canonical projection $\varphi^i: U(\mathfrak g)\rightarrow U(\mathfrak g)/\mathfrak m^-_iU(\mathfrak g)+U(\mathfrak g)\mathfrak m_i= U(\mathfrak r_i)$ is $\mathfrak r_i$ equivariant and so $\Phi^i:=Id \otimes \varphi^i$ maps $(V\otimes U(\mathfrak g))^{\mathfrak r_i}$ into $(V_{\mathbb Z\alpha_i}\otimes U(\mathfrak r_i))^{\mathfrak r_i}$.

Now let $\varphi_i:U(\mathfrak r_i)\rightarrow U(\mathfrak h)$ be the Harish-Chandra map and set $\Phi_i=Id \otimes \varphi_i: V_{\mathbb Z \alpha_i}\otimes U(\mathfrak r_i)\rightarrow V_0 \otimes U(\mathfrak h)$.
Obviously $\varphi =\varphi_i \varphi^i, \Phi= \Phi_i \Phi^i$.

\subsection{}\label{2.2}

The special case of the \cite {PRV} applied to the case $\mathfrak g =\mathfrak {sl}(2)$ computes $\Phi_i(V_{\mathbb Z\alpha_i}\otimes U(\mathfrak r_i))^{\mathfrak r_i}$.  Indeed let $V(m)$ denote a simple $\mathfrak r_i$ of $V_{\mathbb Z\alpha_i}$ of dimension $2m+1$.  Set $\psi_{n,i}=\prod_{m=1}^n(h_i-(m-1))$.  Then from \cite {PRV} (for which a simple proof is given in \cite {J0}) or directly one obtains
$$\Phi_i(V(m)\otimes U(\mathfrak r_i))^{\mathfrak r_i}=U(\mathfrak h)^{s_i.}\psi_{n,i}. \eqno {(1)}$$

\subsection{}\label{2.3}

The above result may be cast into the following form. Let $v_0(m)$ be a non-zero element of $V_0(m)$.  Let $K$ be the fraction field of the commutative domain $U(\mathfrak h)$.  Define an endomorphism $\xi_i$ of $V(m)_0\otimes K$ by $$\xi_i(v_0(m)\otimes q)= v_0(m) \otimes \frac {\psi_{n,i}}{s_i.\psi_{n,i}}s_i.q$$.

It is clear that $v_0(m)\otimes q$ is $\xi_i$ invariant if and only if $$q=\frac {\psi_{n,i}}{s_i.\psi_{n,i}}s_i.q. \eqno {(2)}$$.

Here one may remark that $s_i.\psi_{n,i}=(-1)^n\prod_{m=1}^n(h_i+(m+1))$, whose zeros are distinct from those of $\psi_{n,i}$.  Thus $(2)$ holds if and only if $\psi_{n,i}$ divides $q$ and $p:=q/\psi_{n,i}$ satisfies $s_i.p=p$.

Now extend $\xi_i$ to an endomorphism of $V_0\otimes K$ by linearity using the direct decomposition of $V_{\mathbb Z \alpha_i}$ into simple $\mathfrak r_i$ modules and linearity. Then by the above we have the following

\begin {lemma}  $\Phi_i(V_{\mathbb Z\alpha_i}\otimes U(\mathfrak r_i))^{\mathfrak r_i}= (V_0\otimes U(\mathfrak h))^{\xi_i}$.
\end {lemma}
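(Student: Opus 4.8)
The plan is to reduce the lemma to the $\mathfrak{sl}(2)$ computation already recorded in equation (1), decomposing everything according to the isotypic structure of $V_{\mathbb Z\alpha_i}$ as an $\mathfrak r_i$-module.

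First I would fix the decomposition $V_{\mathbb Z\alpha_i}=\bigoplus_j V(m_j)$ into simple $\mathfrak r_i$-submodules, where each $V(m_j)$ has dimension $2m_j+1$ and contributes a one-dimensional zero-weight space spanned by some $v_0(m_j)$. Since $\Phi_i$ and $\xi_i$ are both defined $\mathfrak r_i$-isotypically (the latter by linearity, the former because $\varphi_i$ is $\mathfrak h$-equivariant and respects the $\mathfrak r_i$-module structure through $\varphi^i$), it suffices to prove the equality of the two subspaces on each summand $V(m_j)\otimes U(\mathfrak h)$ separately; the general case then follows by taking direct sums. This is the step where I invoke that both sides decompose compatibly, so the bulk of the work is the single-module statement.

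For a fixed simple summand $V(m)$, the right-hand side $(V(m)_0\otimes U(\mathfrak h))^{\xi_i}$ consists exactly of those $v_0(m)\otimes q$ with $q\in U(\mathfrak h)$ satisfying the invariance condition (2), namely $q=\tfrac{\psi_{n,i}}{s_i.\psi_{n,i}}\,s_i.q$. The remark following the definition of $\xi_i$ already identifies this solution set: because $s_i.\psi_{n,i}=(-1)^n\prod_{m=1}^n(h_i+(m+1))$ has zeros disjoint from those of $\psi_{n,i}$, equation (2) forces $\psi_{n,i}\mid q$ and $p:=q/\psi_{n,i}$ to satisfy $s_i.p=p$. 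Hence the set of admissible $q$ is precisely $U(\mathfrak h)^{s_i.}\psi_{n,i}$. But this is exactly the description of $\Phi_i(V(m)\otimes U(\mathfrak r_i))^{\mathfrak r_i}$ given by equation (1). So on each summand the two subspaces coincide, and I am done after reassembling.

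The main obstacle I anticipate is verifying that the invariance condition (2) really does cut out $U(\mathfrak h)^{s_i.}\psi_{n,i}$ and not some larger or smaller set, i.e.\ the divisibility and $s_i$-invariance equivalence. This rests on the disjointness of zeros of $\psi_{n,i}$ and $s_i.\psi_{n,i}$: writing (2) as $s_i.\psi_{n,i}\cdot q = \psi_{n,i}\cdot s_i.q$ inside the domain $U(\mathfrak h)$, unique factorization forces $\psi_{n,i}$ to divide $q$, and then dividing through yields $s_i.p=p$. One must be slightly careful that the translated action $s_i.$ is what appears throughout (so that $s_i.\psi_{n,i}$ really has the stated form) and that passing to the fraction field $K$ to define $\xi_i$ does not enlarge the invariant set beyond $U(\mathfrak h)$-rational solutions; this is automatic since an $s_i.$-invariant element of $K$ that is $\psi_{n,i}$ times a polynomial is itself a polynomial. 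Everything else is bookkeeping over the isotypic decomposition.
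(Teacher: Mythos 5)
Your proposal is correct and follows essentially the same route as the paper: decompose $V_{\mathbb Z\alpha_i}$ into simple $\mathfrak r_i$-summands, apply the PRV computation (1) to each summand, and identify the $\xi_i$-invariance condition (2) with $U(\mathfrak h)^{s_i.}\psi_{n,i}$ via the coprimality of $\psi_{n,i}$ and $s_i.\psi_{n,i}$. The paper's proof is exactly this combination of (1) and the remark after (2), extended by linearity over the isotypic decomposition.
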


\subsection{}\label{2.4}

Set $\Xi= \{\xi_i:i \in I\}$. We obtain the following analogue of the KNV theorem.

\begin {thm}  $\Phi^V$ is an isomorphism of $(V\otimes U(\mathfrak g))^\mathfrak g$ onto $(V_0\otimes U(\mathfrak h))^\Xi$.
\end {thm}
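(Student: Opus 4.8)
The plan is to bootstrap the full statement from the rank-one Lemma of \ref{2.3}, using that $\mathfrak g$-invariance can be tested one parabolic at a time. Since $\mathfrak g$ is generated as a Lie algebra by the $\mathfrak s_i$, $i\in I$, and invariance under all the $\mathfrak s_i$ entails invariance under $\mathfrak h=\sum_i k\,h_i$, one has $(V\otimes U(\mathfrak g))^{\mathfrak g}=\bigcap_{i\in I}(V\otimes U(\mathfrak g))^{\mathfrak r_i}$. Now take $z\in(V\otimes U(\mathfrak g))^{\mathfrak g}$. For each $i$ it is in particular $\mathfrak r_i$-invariant, so by \ref{2.1} its image $\Phi^i(z)$ lies in $(V_{\mathbb Z\alpha_i}\otimes U(\mathfrak r_i))^{\mathfrak r_i}$; applying $\Phi_i$, using $\Phi^V=\Phi_i\Phi^i$ and the Lemma, gives $\Phi^V(z)\in(V_0\otimes U(\mathfrak h))^{\xi_i}$. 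As $i$ was arbitrary this yields the inclusion $\Phi^V(V\otimes U(\mathfrak g))^{\mathfrak g}\subseteq(V_0\otimes U(\mathfrak h))^{\Xi}$, so that $\Phi^V$ at least maps into the asserted target.

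For injectivity I would pass to the associated graded for $\mathscr F$. The symbol of $\varphi$ is the restriction map $S(\mathfrak g)=k[\mathfrak g^*]\to S(\mathfrak h)=k[\mathfrak h^*]$ (the $\rho$-shift being of lower order), so $\gr\Phi^V=\Id\otimes(\text{restriction to }\mathfrak h^*)$. If $0\neq z\in(V\otimes U(\mathfrak g))^{\mathfrak g}$ has top degree $n$, then $\gr_n z$ is a nonzero $\mathfrak g$-invariant in $V\otimes S(\mathfrak g)$, i.e. a nonzero $G$-equivariant polynomial map $\mathfrak g^*\to V$. Because $G\cdot\mathfrak h^*$ is dense in $\mathfrak g^*$ such a map cannot vanish identically on $\mathfrak h^*$; hence $\gr_n(\Phi^V(z))\neq 0$ and so $\Phi^V(z)\neq 0$. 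Thus $\Phi^V$ is injective on invariants.

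Surjectivity is the real point, and I would obtain it by a graded dimension count. The symbols of $\psi_{n,i}$ and $s_i.\psi_{n,i}$ are $h_i^n$ and $(-h_i)^n$, and the translated $s_i$-action degenerates at top degree to the linear one; hence the defining relation $(2)$ degenerates to the symmetric-algebra condition produced by the ``$1\to 0$'' replacement of \ref{1.1}, giving $\gr(V_0\otimes U(\mathfrak h))^{\Xi}\subseteq(V_0\otimes S(\mathfrak h))^{\Xi_0}$, where $\Xi_0$ denotes the symmetric analogue operators. Granting the symmetric-algebra analogue of the theorem, namely that restriction to $\mathfrak h^*$ carries $(V\otimes S(\mathfrak g))^{\mathfrak g}$ isomorphically onto $(V_0\otimes S(\mathfrak h))^{\Xi_0}$, one gets, in each filtration degree,
$$\dim(V\otimes U(\mathfrak g))^{\mathfrak g}_{\le n}\le\dim(V_0\otimes U(\mathfrak h))^{\Xi}_{\le n}\le\dim(V_0\otimes S(\mathfrak h))^{\Xi_0}_{\le n}=\dim(V\otimes S(\mathfrak g))^{\mathfrak g}_{\le n}=\dim(V\otimes U(\mathfrak g))^{\mathfrak g}_{\le n},$$
the first inequality because $\Phi^V$ is filtered and injective with image inside the $\Xi$-invariants, and the last equality by complete reducibility (taking $\mathfrak g$-invariants commutes with $\gr$). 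Equality throughout forces the filtered injection $(V\otimes U(\mathfrak g))^{\mathfrak g}\hookrightarrow(V_0\otimes U(\mathfrak h))^{\Xi}$ to be onto, completing the proof.

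The main obstacle is precisely the symmetric-algebra input used in the last step: the surjectivity of restriction onto the $\Xi_0$-invariants, together with the verification that the symbol of relation $(2)$ is exactly the symmetric Zhelobenko condition. This is the one genuinely non-formal point, and is where a direct $\mathfrak{sl}(2)$ computation, as already signalled in \ref{1.1}, has to be carried out; everything else reduces mechanically to the rank-one Lemma and the filtration.
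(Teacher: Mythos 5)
Your proposal is correct and follows essentially the paper's own argument: the inclusion $\Phi^V((V\otimes U(\mathfrak g))^{\mathfrak g})\subseteq (V_0\otimes U(\mathfrak h))^{\Xi}$ via the rank-one Lemma of \ref{2.3}, an elementary injectivity argument, and surjectivity by passage to graded objects together with the image of the (non-surjective) Chevalley restriction map, which is exactly the second route the paper's proof cites (the method of \cite{KNV}). The symmetric-algebra statement you ``grant'' at the end is precisely the input the paper likewise defers to the literature rather than proving, so your level of completeness matches the paper's.
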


\begin {proof}  By Lemma \ref {2.3} we have $\Phi((V\otimes U(\mathfrak g))^\mathfrak g) \subset (V_0\otimes U(\mathfrak h))^\Xi$.  Injectivity which is elementary follows exactly as in the proof of \cite [Lemma 2.6]{J1}.  Surjectivity follows exactly as in \cite [Sect. 3]{J1} using the PRV determinant instead of $D$ (see \cite [3.3,3.6]{J1}.  Surjectivity also follows by the method of \cite {KNV} using passage to the graded objects and calculating the image of the corresponding Chevalley restriction map (which is \textit{not} surjective).
\end {proof}

\section{The Analogue of the Basic Identity}

We now establish the analogue of the basic identity \cite [Cor. 2.5]{J2}.  Via the results of Section 2 the proof is much the same.  The term analogue will be dropped and we shall use the same notation for these analoguous objects. The formulae are similar but slightly different.

\subsection{}\label{3.1}

It follows from the construction of the Zhelobenko operators that we have the following version of \cite [Lemma 2.3]{Z} (see also \cite [Lemma 2.2]{J2})

\begin {lemma}  For all $i \in I$, $a \in V_0 \otimes S(\mathfrak h)$, $b \in S(\mathfrak h)$ one has $$\xi_i(ba)=(s_i.b)\xi_i(a). \eqno {(3)}$$.
\end {lemma}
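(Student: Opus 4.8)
The plan is to verify the identity directly from the definition of $\xi_i$ given in \ref{2.3}, after reducing to a single simple $\mathfrak r_i$-constituent by linearity. Recall that $\xi_i$ was extended to $V_0\otimes K$ by decomposing $V_{\mathbb Z\alpha_i}$ into its simple $\mathfrak r_i$-submodules and acting separately on each zero-weight line $V_0(m)$. Since both sides of $(3)$ are $k$-linear in $a$, it suffices to treat $a=v_0(m)\otimes q$ with $q\in S(\mathfrak h)$ for a single fixed constituent $V(m)$, and then sum.

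First I would observe that $b\in S(\mathfrak h)$ acts on $V_0\otimes S(\mathfrak h)$ by multiplication in the second factor alone, so that $ba=v_0(m)\otimes bq$. Applying the definition of $\xi_i$ then gives
$$\xi_i(ba)=v_0(m)\otimes \frac{\psi_{n,i}}{s_i.\psi_{n,i}}\, s_i.(bq).$$
The key step is that the translated reflection $s_i.$ is a $k$-algebra automorphism of $S(\mathfrak h)$, being the transport to polynomial functions of the affine map $\lambda\mapsto s_i(\lambda+\rho)-\rho$; hence $s_i.(bq)=(s_i.b)(s_i.q)$. As $s_i.b$ again lies in $S(\mathfrak h)\subset K$, it commutes with the rational factor $\psi_{n,i}/s_i.\psi_{n,i}$ and may be pulled to the front, yielding
$$\xi_i(ba)=(s_i.b)\Bigl(v_0(m)\otimes \frac{\psi_{n,i}}{s_i.\psi_{n,i}}\, s_i.q\Bigr)=(s_i.b)\,\xi_i(a).$$
Summing over the simple constituents recovers $(3)$ for arbitrary $a\in V_0\otimes S(\mathfrak h)$.

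There is no serious obstacle here: the computation is a one-line consequence of the multiplicativity of $s_i.$ The only point requiring a moment's care is that the scalar factor $\psi_{n,i}/s_i.\psi_{n,i}$ depends on the constituent $V(m)$ whereas $b$ does not. One must therefore note that $b$ acts uniformly, by multiplication in the $U(\mathfrak h)$-factor, across all constituents simultaneously; it is precisely this uniformity that permits the common twisted scalar $s_i.b$ to be extracted as a single factor after reassembling the sum.
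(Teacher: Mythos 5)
Your proof is correct and is essentially the paper's own argument: the paper simply asserts that the lemma ``follows from the construction of the Zhelobenko operators,'' and your verification -- reducing by linearity to a single constituent $V(m)$, using that the translated reflection $s_i.$ acts as an algebra automorphism of $S(\mathfrak h)$ so that $s_i.(bq)=(s_i.b)(s_i.q)$, and noting that $b$ acts uniformly across constituents while the twisting factor $\psi_{n,i}/s_i.\psi_{n,i}$ does not -- is precisely the content of that assertion spelled out.
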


\subsection{}\label{3.2}

It follows from the Theorem \ref {2.4} and the previous lemma that $(V_0 \otimes S(\mathfrak h))^\Xi$ is a free $S(\mathfrak h)^{W.}$ module on $\dim V_0$ generators. Their leading order terms are just the set of generators for the free $S(\mathfrak h)^W$ module obtained as the image of the Chevalley restriction map of $(V\otimes S(\mathfrak g))^\mathfrak g$ into $(V_0\otimes S(\mathfrak h))^W$ (which is \textit{not} generally surjective). In the case when $V$ is the adjoint module we may label these generators by $I$, specifically as $\{J_i\}_{i\in I}$.

\subsection{}\label{3.3}

From now we just take $V$ to be the adjoint module.  In this case the Chevalley restriction map \textit{is} surjective.  Let $J$ be a Zhelobenko invariant.  We may write
$$J=\sum_{i\in I} \varpi_i \otimes q_i,$$
for some $q_i \in S(\mathfrak h)$.

We now prove the following analogue of \cite [Proposition 2.4]{J2}.

\begin {prop}

\

(i) $s_i.q_i=-\frac {h_i+2}{h_i} q_i$.

\

(ii)  $q_i$ is divisible by $h_i$ and $p_i:=q_i/h_i$ is $s_i.$ invariant.

 \

(iii)  $q_j-s_i.q_j=\frac {1}{2} h_j(\alpha_i)(q_i-s_i.q_i), \forall i,j \in I\in S(\mathfrak h)$.

\end {prop}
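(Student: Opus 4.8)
The plan is to reduce everything to the $\mathfrak s_i$-isotypic decomposition of $V_0=\mathfrak h$ and then apply the invariance condition $(2)$ componentwise. For the adjoint module one has $V_{\mathbb Z\alpha_i}=\mathfrak h\oplus\mathfrak g_{\alpha_i}\oplus\mathfrak g_{-\alpha_i}$ (no multiple of $\alpha_i$ beyond $\pm\alpha_i$ is a root), and as an $\mathfrak s_i$-module this is the three dimensional adjoint copy $\mathbb C x_i\oplus\mathbb C h_i\oplus\mathbb C y_i\cong V(1)$ together with $\dim\mathfrak h-1$ trivial modules filling out $\ker\alpha_i\subset\mathfrak h$. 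Hence $V_0=\mathfrak h$ splits $\mathfrak s_i$-invariantly as $\mathbb C h_i\oplus\ker\alpha_i$, the first summand being the zero weight line of $V(1)$ (so $n=1$, $\psi_{1,i}=h_i$) and the remainder a sum of zero weight lines of trivial modules (so $n=0$, $\psi_{0,i}=1$). Writing $J=h_i\otimes r_i+(\text{terms in }\ker\alpha_i)$, the $\Xi$-invariance of $J$ together with the componentwise form of $\xi_i$ forces the $\ker\alpha_i$-coefficients to be $s_i.$ invariant, while $r_i$ must satisfy $(2)$ for $n=1$.

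The next step is the $\mathfrak{sl}(2)$ computation: since $\rho(\alpha_i^\vee)=1$, the translated action of $s_i$ on the degree one element $h_i\in S(\mathfrak h)$ gives $s_i.h_i=-h_i-2$. Substituting $\psi_{1,i}=h_i$ and $s_i.\psi_{1,i}=-(h_i+2)$ into $(2)$ gives $r_i=-\tfrac{h_i}{h_i+2}s_i.r_i$, that is $s_i.r_i=-\tfrac{h_i+2}{h_i}r_i$. To convert this into statement (i) I project $J$ onto the $h_i$-line along $\ker\alpha_i$; under the Cartan identification $\mathfrak h^*\cong\mathfrak h$ the projection of $\varpi_j$ has $h_i$-component proportional to $\alpha_i(\varpi_j)=\tfrac{(\alpha_i,\alpha_i)}{2}\delta_{ij}$, so $r_i$ is a nonzero scalar multiple of $q_i$ and the homogeneous relation passes to $s_i.q_i=-\tfrac{h_i+2}{h_i}q_i$. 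Statement (ii) is then immediate from the remark following $(2)$: since the zeros of $s_i.\psi_{1,i}=-(h_i+2)$ are disjoint from those of $\psi_{1,i}=h_i$, the relation in (i) holds exactly when $h_i$ divides $q_i$ and $p_i=q_i/h_i$ is $s_i.$ invariant.

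For (iii) the idea is to measure the $s_i.$ anomaly of $J$ globally. Applying $s_i.$ to the $S(\mathfrak h)$-factor alone and subtracting, the $\ker\alpha_i$-terms cancel by their invariance and only the $V(1)$ line survives, giving $\sum_j\varpi_j\otimes(q_j-s_i.q_j)=h_i\otimes(r_i-s_i.r_i)$; since $r_i$ is a scalar multiple of $q_i$ the right hand coefficient is the same multiple of $q_i-s_i.q_i$. Extracting the $\varpi_j$-coefficient via the basis of $\mathfrak h^*$ dual to $\{\varpi_k\}$, which under the Cartan identification is $\{2\alpha_k/(\alpha_k,\alpha_k)\}$, pairs $h_i$ against $\tfrac{2}{(\alpha_j,\alpha_j)}\alpha_j(h_i)$, and after the scalar from the previous paragraph cancels this yields precisely $q_j-s_i.q_j=\tfrac12 h_j(\alpha_i)(q_i-s_i.q_i)$. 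The only genuine obstacle is bookkeeping: one must track the several normalizations in the identification $\mathfrak h^*\cong\mathfrak h$ carefully enough that the constant in (iii) emerges as the stated $\tfrac12 h_j(\alpha_i)$ rather than an off by a form factor variant. The structural input, namely the $V(1)$ plus trivial decomposition, the condition $(2)$, and the shift $s_i.h_i=-h_i-2$, is otherwise routine.
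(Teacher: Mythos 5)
Your proposal is correct and takes essentially the same route as the paper: both arguments rest on the $\mathfrak s_i$-isotypic splitting of $\mathfrak h$ into the line through $h_i$ (which is the $\alpha_i$-component of $2\varpi_i$ in the paper's decomposition $2\varpi_i=\alpha_i+\alpha_i^\perp$) and the trivial part $\ker\alpha_i$ (the paper's $\alpha_i^\perp$), combined with condition $(2)$ for $n=1$ and the shift $s_i.h_i=-(h_i+2)$. The only difference is organizational — the paper applies $\xi_i$ directly to each $\varpi_j\otimes q_j$ via Lemma \ref{3.1} and equates coefficients in the $\varpi$-basis, while you impose invariance componentwise in the adapted decomposition and convert by projection — and your normalization constants do come out to the stated $\tfrac12 h_j(\alpha_i)$.
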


\begin {proof} One has $$\alpha_i=2\varpi_i-\alpha_i^\perp,$$
with $\alpha_i^\perp :=-\sum_{j\in I \setminus \{i\}}h_j(\alpha_i)\varpi_j$, being orthogonal to $\alpha_i$.

In view of the definition of $\xi_i$, we obtain
$$2\xi_i(\varpi_i\otimes 1)=\eta_i(\alpha_i^\perp-\alpha_i)\otimes 1=(\alpha_i^\perp-\alpha_i)\otimes 1 -2\alpha_i\otimes h_i^{-1}=(\alpha_i^\perp\otimes 1)-\alpha_i\otimes \frac{h_i}{h_i+2}.$$

In view of $(3)$ this gives
$$\xi_i(\varpi_i\otimes q_i)=\frac{1}{2}(\alpha^\perp_i\otimes s_i.q_i)-(\varpi_i-\frac {\alpha_i^\perp}{2})\otimes \frac {h_i}{h_i+2}s_i.q_i.\eqno {(4)}$$

On the other hand by definition of $\xi_i$ and $(3)$ again we have
$$\sum_{j\in I\setminus \{i\}}\xi_i(\varpi_j\otimes q_j)= \sum_{j\in I\setminus \{i\}}\varpi_j\otimes s_i.q_j. \eqno {(5)}$$

In the sum of the right hand sides of $(4)$ and $(5)$ the coefficient of $\varpi_i$ is just $-\frac {h_i}{h_i+2}s_i.q_i$ and so the invariance of the sum of the left hand sides, which is $J$, implies that $q_i=-\frac {h_i}{h_i+2}s_i.q_i$. Consequently $q_i$ is divisible by $h_i$. This gives (i). (ii) follows from (i).

Again by the invariance of $J$ and equating the coefficients of $\varpi_j$ we obtain (iii) from $(4)$ and $(5)$.

\end {proof}

\textbf{Remark}.   Although (i) differs slightly from the first part of \cite [Proposition 2.4]{J2}, (iii) is exactly the same as \cite [Eq. 6]{J2}.

\subsection{}\label{3.4}

By \cite [Eq. (2)]{J2} or directly we obtain $s_i.h_j=h_j-h_j(\alpha_i)(h_i+1)$, for all $i,j \in I$.  Recalling that $s_i.$ acts by automorphisms, Proposition \ref {3.3}(iii) gives
$$h_j(p_j-s_i.p_j)= h_j(\alpha_i)(h_i+1)(p_i-s_i.p_j). \eqno {(6)}$$

\subsection{}\label{3.5}

 As in \cite [2.5]{J1} we use the automorphism $\theta$ of $S(\mathfrak h)$ defined by $\theta(q)(\lambda)=q(\lambda + \rho)$, which has the property that $w.\theta(q)=\theta(wq)$.  Observe further that $\theta(h_i+m)=h_i+m+1$.  Now define new polynomials $P_i:=\theta^{-1}(p_i)$.  Substitution in $(6)$ gives
$$(h_j-1)(P_j-s_iP_j)=h_j(\alpha_i)h_i(P_i-s_iP_j). \eqno {(7)}$$

Now following \cite {BGG} we introduce linear operators $A_i:i\in I$ on $S(\mathfrak h)$ by the formulae
$$A_if:=\frac{f-s_if}{h_i}, \forall f \in S(\mathfrak h). \eqno {(8)}$$

This gives the

\begin {cor}  For all $i,j \in I$ one has
$$A_iP_j=h_j(\alpha_i)\frac{P_i-P_j}{-1+s_i(h_j)}. \eqno {(9)}$$
\end {cor}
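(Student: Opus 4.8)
The plan is to derive (9) from the single identity (7) by applying the simple reflection $s_i$ to it. The reason one cannot read (9) off (7) immediately is a mismatch: dividing (7) by $h_i$ and invoking (8) gives $A_iP_j = h_j(\alpha_i)(P_i - s_iP_j)/(h_j-1)$, whose numerator still contains $s_iP_j$ and whose denominator is $h_j-1$, whereas (9) wants the numerator $P_i-P_j$ and the denominator $s_i(h_j)-1$. I would resolve this not by manipulating (7) in place but by transforming the whole identity under the automorphism $s_i$.

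Before doing so I would record three facts. First, $s_i$ acts on $S(\mathfrak h)$ by a ring automorphism with $s_i^2=\Id$. Second, $s_i(h_i)=-h_i$; this follows from $s_i(h_j)=h_j-h_j(\alpha_i)h_i$ (the $\theta^{-1}$-image of the relation in \ref{3.4}) specialized to $j=i$, using $h_i(\alpha_i)=2$. Third, and crucially, $s_iP_i=P_i$: by Proposition \ref{3.3}(ii) the polynomial $p_i$ is $s_i.$-invariant, and since $P_i=\theta^{-1}(p_i)$ and $\theta$ satisfies $w.\theta(q)=\theta(wq)$ (as in \ref{3.5}), we get $s_iP_i=\theta^{-1}(s_i.\,\theta(P_i))=\theta^{-1}(s_i.p_i)=\theta^{-1}(p_i)=P_i$. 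Note that only the diagonal term $P_i$ is fixed; $P_j$ for $j\neq i$ is not, so $s_iP_j$ must be carried along.

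Applying $s_i$ to (7) and using these facts, the left-hand side becomes $(s_i(h_j)-1)$ times $s_i(P_j-s_iP_j)=-(P_j-s_iP_j)$, while the right-hand side becomes $h_j(\alpha_i)$ times $s_i(h_i)=-h_i$ times $s_i(P_i-s_iP_j)=P_i-P_j$. The two minus signs cancel, leaving the transformed identity
$$(s_i(h_j)-1)(P_j-s_iP_j)=h_j(\alpha_i)\,h_i\,(P_i-P_j).$$
Dividing this by $h_i(s_i(h_j)-1)$ in the fraction field $K$ and recalling from (8) that $A_iP_j=(P_j-s_iP_j)/h_i$ yields exactly (9), since $-1+s_i(h_j)=s_i(h_j)-1$.

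The point to watch, rather than a deep obstacle, is the bookkeeping: one must apply $s_i$ to $P_i$ and to $P_j$ differently (only $P_i$ is fixed), and track the two cancelling signs produced by $s_i(P_j-s_iP_j)$ and by $s_i(h_i)=-h_i$. One should also observe that $h_i$ and $s_i(h_j)-1$ are nonzero in $S(\mathfrak h)$, so that the division is performed legitimately in $K$.
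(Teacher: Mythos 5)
Your proof is correct, but it is not the manipulation the paper uses. The paper passes from (7) to (9) by a single subtraction: subtracting $h_j(\alpha_i)h_i(P_j-s_iP_j)$ from both sides of (7), the $s_iP_j$ terms on the right cancel outright, and the identity $h_j-1-h_j(\alpha_i)h_i=-1+s_i(h_j)$ turns the left-hand coefficient into $-1+s_i(h_j)$, giving $(-1+s_i(h_j))(P_j-s_iP_j)=h_j(\alpha_i)h_i(P_i-P_j)$; dividing by $h_i(-1+s_i(h_j))$ yields (9). That argument needs nothing beyond the linear reflection formula $s_i(h_j)=h_j-h_j(\alpha_i)h_i$; in particular it never invokes $s_iP_i=P_i$. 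Your route --- applying the automorphism $s_i$ to all of (7) --- arrives at exactly the same intermediate identity, but only because you supply two extra inputs: $s_i(h_i)=-h_i$ and, crucially, the invariance $s_iP_i=P_i$, which you correctly derive from Proposition 3.3(ii) by conjugating the $s_i.$-invariance of $p_i$ through $\theta$ using $w.\theta(q)=\theta(wq)$. Both arguments are sound and essentially one line long; the paper's is leaner (no appeal to 3.3(ii) at this stage), while yours has the conceptual merit of exhibiting (9) as precisely the $s_i$-reflection of (7), so that, given the $s_i$-invariance of $P_i$, the two equations visibly carry the same information. Your closing observation that the division is legitimate in $K$ (since $h_i\neq 0$ and $s_i(h_j)-1\neq 0$ in the domain $S(\mathfrak h)$) applies equally to the paper's final step and is a point the paper leaves implicit.
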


\begin {proof}  Subtract $h_j(\alpha_i)h_i(P_j-s_iP_j)$ from both sides of $(7)$. Since $h_j-1-h_j(\alpha_i)h_i=-1+s_i(h_j)$, the assertion results.
\end {proof}

\subsection{}\label{3.6}

Equation $(9)$ is what we must solve in order to determine the (analogue) Zhelobenko invariant $J$. Of course this is not too easy as there are infinitely many solutions.  The simplest solution is when all the $P_j$ equal $1$.

Equation $(9)$ is remarkably similar to \cite [Eq. (9)]{J2} which we must solve to determine the Zhelobenko invariants.  The leading order terms $P_i^0$ in both cases both satisfy the same equation (\cite [Eq. (12)]{J2}) as of course we expect.

\section{Proof of the Kostant Conjecture}

\subsection{}\label{4.1}

The result stated in $(9)$ and its close relation to \cite [Eq. (9)]{J2} means that the proof of the Kostant conjecture now follows exactly the same reasoning as that of the analogue Kostant conjecture.  Indeed as already pointed out in \cite [8.1] {J2} we may replace $1$ occurring in the denominator of \cite [Eq. (9)]{J2} by any scalar $c$ and still obtain the conclusion of \cite [Proposition 7.8]{J2} but with $1$ occurring in the denominator by $c$.  The case $c=0$ gave \cite [Proposition 8.1]{J2} which corresponds to the symmetric algebra case.  Again the fact that this scalar is $c$ rather than $1$ does not effect the conclusion of \cite [Proposition 8.2]{J2} from which \cite [Proposition 8.6]{J2} results.  This immediately gives the Kostant conjecture as formulated in \ref {1.4}.  Here the only difference is the cofinite subset of $k$ for which the two filtrations are the same.  As already noted in \cite [8.7, Remark 1]{J2} for the analogue Kostant problem this set is determined by the zeros of $D$ (as defined in \cite [3.3]{J1}) whereas for the Kostant problem it is determined by the zeros of the KPV determinant, as described in \cite [3.3]{J1}.  In both cases the cofinite set contains all the positive integers.  Indeed the only ``bad" integers are $-2$ and $0$ respectively.

\section{Added Remarks}
\subsection{}\label{5.1}

For all $m \in \mathbb N$, set $V(m)^\vee$ denote the direct sum with respect to the action of the principal s-triple for $\mathfrak g^\vee$ into a direct sum of simple $\mathfrak {sl}(2)$ modules of dimension $2m+1$. These modules, being pairwise non-isomorphic, are pairwise orthogonal with respect to the Killing form for $\mathfrak g^\vee$. Identify $\mathfrak h^\vee$ with $\mathfrak h^*$ and hence with $\mathfrak h$ through the Killing form.  It follows from the above that the zero weight subspaces $V(m)^\vee_0: m \in \mathbb N$ of $V(m)^\vee$ form an orthogonal direct sum decomposition of $\mathfrak h$.
\subsection{}\label{5.2}

 Kostant \cite [Thm. 35]{K2} showed, following an analogous result (Hopf-Koszul-Samelson, see \cite [Thm. 23]{K2} for $\bigwedge^* \mathfrak g$, that $C(\mathfrak g)^\mathfrak g$ is again a Clifford algebra over a subspace $P$ of so-called primitive elements.

The triangular decomposition of $\mathfrak g$ gives a triangular decomposition of $C(\mathfrak g)$ and hence a Harish-Chandra projection $\phi$ of $C(\mathfrak g)$ onto $C(\mathfrak h)$.  Bazlov \cite [Prop. 4.5] {B2} first wrote out a proof that $\phi$ restricts to an isomorphism of $P$ onto $\mathfrak h$.  It seems that the result was known to Kostant and in any case obtains rather easily from the analysis in \cite [Sect. 6]{K2}.

Let $\mathscr F$ denote the filtration on $\mathfrak h$ given by $\mathscr F^m(\mathfrak h):= \Phi_\rho(\mathfrak g \otimes \mathscr F^mU(\mathfrak g))^\mathfrak g: m \in \mathbb N$. 

The truth of the Kostant conjecture (as established in \ref {4.1}) means that $\mathscr F^m(\mathfrak h)\subset \oplus_{n\leq m}V(n)^\vee_0$.

\subsection{}\label{5.3}

Let $m_i:i \in I$ denote the exponents of $\mathfrak g$ (which are the same for $\mathfrak g^\vee$) taken in order of increasing value.

Choose a basis for $P$ by successively taking elements $c_{2m_i+1}:i \in I$ of degree $2m_i+1$ such that their images $h_{m_i}:=\phi(c_{2m_i+1})$ are pairwise orthogonal to the $m_j:j<i$. 

The connection between the original Kostant conjecture and its presentation in \ref {4.1} was explained in \cite {B2}.   They may be succinctly summarized as follows. Through the construction of $P$ given in \cite [Sect. 6]{K2}, the map $\delta: U(\mathfrak g)\rightarrow C(\mathfrak g)$ defined in \cite [Eq. (70)]{K2} and notably \cite [Thm. 74]{K2} describing the image of $\delta$, it is immediate from \ref {5.1}, \ref {5.2} that the $h_{m_i}:m_i=m$ span $V(m)_0^\vee$.  This result is the original form of the Kostant conjecture.  However I have been unable to ascertain whether Kostant intended a more precise result taking the $c_{2m_i+1}:i \in I$  to be those described in \cite [Eq. (326)]{K2}, where the analysis suggests that he was aware of the symmetric algebra analogue \cite [Eq (11)]{J2} of his conjecture.

\subsection{}\label{5.4}

It should be noted that although Bazlov's manuscript \cite {B2} contains many useful facts, his handling of lower order terms, necessarily introduced by Clifford/Enveloping algebra considerations, is erroneous.  Our analysis of these terms in \cite {J2} and in the present manuscript requires an \textit{extensive} knowledge of the images of the Harish Chandra (or generalized Harish-Chandra maps) and this constitutes what we consider to be the major breakthrough on the Kostant conjecture enabling one to pass (both here and in \cite {AM}) from the symmetric algebra to the Clifford/Enveloping algebra case.  These lower order terms are obtained from the $P_i:i \in I$ and do not satisfy the key assertions in the proof \cite [Thm. 5.5]{B2}.

\end{document}